\newcommand{\bv}[1]{{\mbox{\boldmath $ #1$}}}
\def\vbar{\bv{\bar{v}}}
\def\abar{\bv{\bar{a}}}
\def\a{\bv{a}}
\def\f{\bv{f}}
\def\v{\bv{v}}
\def\m{\bv{m}}
\def\y{\bv{y}}
\def\x{\bv{x}}
\def\b{\bv{b}}
\def\X{\bv{X}}
\def\Y{\bv{Y}}
\def\ones{\bv{1}}
\def\cb{\bv{c}}
\def\db{\bv{d}}
\def\cbar{\bv{\bar{c}}}
\def\dbar{\bv{\bar{d}}}
\def\LToep{{\mathcal T}}
\def\at{{\tilde\a}}
\def\bt{{\tilde\b}}
\def\mt{{\tilde\m}}
\def\diag{{\rm diag}}
\def\Real{\mathbb{R}}  
\newcommand{\lbeq}[1]{{\label{OR:eq:#1}}}
\newcommand{\eq}[1]{{(\ref{OR:eq:#1})}}
\newcommand{\eqtwo}[2]{{(\ref{OR:eq:#1}, \ref{OR:eq:#2})}}
\newcommand{\be}[1]{\begin{equation} \lbeq{#1}}
\newcommand{\ee}{\end{equation}}
\newcommand{\Vector}[1]{{\left(\begin{matrix} #1 \end{matrix}\right)}}
\newtheorem{proposition}{Proposition}
\newtheorem{theorem}{Theorem}
\newtheorem{lemma}{Lemma}
\newenvironment{proof}{\textit{Proof:}\ }{$~\Box$}
\def\Tbb{{\mathbb L}}
\def\Rbb{{\mathbb R}}
\def\Nbb{{\mathbb N}}
\begin{document}
\begin{frontmatter}

\title{Resolution of the finite Markov moment problem\thanksref{titlefn}}

\thanks[titlefn]{Support by the Wolfgang Pauli Institute (Wien) in the frame of the
``Nanoscience" Programme and the European network HYKE, funded  by
the EC as contract HPRN-CT-2002-00282 is acknowledged.}

\author{Laurent Gosse}
\address{IAC--CNR ``Mauro Picone" (sezione di Bari)\\Via Amendola 122/D - 70126 Bari, Italy}
\ead{l.gosse@ba.iac.cnr.it}

\author{Olof Runborg\corauthref{cor}}
\corauth[cor]{Corresponding Author}
\address{NADA, KTH, 10044 Stockholm, Sweden}
\ead{olofr@nada.kth.se}
\date{}
\vskip 0.5cm
\begin{abstract}
We expose in full detail a constructive procedure to invert the
so--called ``finite Markov moment problem". The proofs rely on the
general theory of Toeplitz matrices together with the classical Newton's
relations. 
\end{abstract}
\begin{keyword} Inverse problems, Finite Markov's moment problem, Toeplitz matrices.
\MSC{65J22}
\end{keyword}
\end{frontmatter}

\section*{R\'esum\'e}

Nous pr\'esentons en d\'etail une proc\'edure constructive pour inverser le ``probleme fini des moments de Markov". Les preuves reposent sur la th\'eorie g\'en\'erale des matrices de Toeplitz et les classiques relations de Newton.

\section*{Version francaise abr\'eg\'ee}

Afin d'inverser le systeme fini et mal conditionn\'e \eq{mom-olof}, Koborov, Sklyar et Fardigola, \cite{kobo1,sklyar1} ont propos\'e un algorithme recursif non-lin\'eaire.
Dans \cite{GO} nous avons prouv\'e un lemme
le r\'eduisant a l'extraction de valeurs propres g\'en\'eralis\'ees, voir \eq{geneig}. Cette Note vise a expliquer en d\'etail les raisons pour lesquelles cette proc\'edure simplifi\'ee r\'esout le probleme des moments de Markov. Apres avoir rappel\'e quelques \'el\'ements de la th\'eorie des matrices de Toeplitz et les relations de Newton (Propositions \ref{toeplitz} et \ref{new}), nous reformulons cet algorithme simplifi\'e afin d'\'etablir facilement certains lemmes techniques. Finalement, le Th\'eoreme \ref{evenK} d\'emontre le lien entre valeurs propres g\'en\'eralis\'ees \eq{geneig} et l'inversion de \eq{mom-olof}.

\section{Introduction}

We aim at inverting a moment system often associated with the prestigious name of Markov, as appearing in \cite{poly,brecor,G,lewis,olof1} in several fields of application; consult \cite{diaco,krein,talenti} for general background on moment problems. 
The original problem is the following.
Given the {\em moments} $m_k$ for $k=1,\ldots,K$, find a bounded measurable
{\em density} function $f$ and a real value $X>0$ such that
\begin{enumerate}
\item $\int_0^X f(\xi)\xi^{k-1} d\xi=m_k, \qquad k=1,\ldots, K,$
\item $|f(\xi)|=1$ almost everywhere on $]0,X[$,
\item $f$ has no more than $K-1$ discontinuity points inside $]0,X[$.
\end{enumerate}
The solution is a piecewise constant function taking values in
$\{-1,1\}$ a.e. on $]0,X[$ and changing sign in at most $K-1$ points,
which we denote $\{u_k\}$, ordered such that 
$0\leq u_1 \leq \cdots\leq u_K=X$.
Finding $\{u_k\}$ from $\{m_k\}$ is
an ill-conditioned problem when the $u_k$ values come close to each 
other; its Jacobian is a Vandermonde matrix and iterative numerical 
resolution routines require extremely good starting guesses. 
For less than 4 moments, however, a direct method based on
solving polynomial equations was presented in \cite{olof1}. 
Here we are concerned with an arbitrary number of moments $K \in \Nbb$. 

We consider a slightly modified version of the problem 
which is more relevant for us. In this case
$f$ takes values in $\{1,0\}$ instead of $\{-1,1\}$
and the moments are scaled as $m_k \to km_k$. 
Moreover, to simplify the discussion we confine ourselves to 
the case when $K$ is even, setting $K=:2n$.
The resulting problem can then be written as an algebraic 
system of nonlinear
equations: Given $m_k$ find $u_k$ such that
\be{mom-olof}
  m_k = \sum_{j=1}^{n} u_{2j}^k-u_{2j-1}^k,
\qquad k=1,\ldots, K=2n.
\ee
An algorithm for solving this problem was
presented by Koborov, Sklyar and Fardigola
in \cite{kobo1,sklyar1}. It requires
solving a sequence of high degree polynomial equations,
constructed through a rather complicated process with unclear stability
properties.
In \cite{GO} we showed that the algorithm 
can be put in a more simple form that makes it
much more suitable for numerical implementation.
The simplified algorithm reads
\begin{enumerate}
\item Construct the matrices $A$ and $B$:
\be{ABdef}
   A = \Vector{1 & & & \\
-m_1 & 2 &\\
\vdots &\ddots &\ddots & \\
-m_{2n-1} & \hdots & -m_1 & 2n
},
\qquad
   B = \Vector{1 & & & \\
m_1 & 2 &\\
\vdots &\ddots &\ddots & \\
m_{2n-1} & \hdots & m_1 & 2n
}.
\ee
\item 
Let $\m = (m_1, m_2, \ldots, m_{2n})^T$ and solve
the lower triangular Toeplitz linear systems
\be{absyst}
A\a=\m, \qquad B\b=-\m,
\ee
to get $\a$ and $\b$.
\item Construct the matrices $A_1$, $A_2$
from $\a= (a_1, a_2, \ldots, a_{2n})^T$ 
as
$$ 
   A_1=\Vector{a_1 & a_2 & \hdots & a_{n} \\
a_2 & a_3 & \hdots & a_{n+1} \\
            \vdots & \vdots & \ddots & \vdots \\
a_{n} & a_{n+1} & \hdots & 
a_{2n-1}
}, \qquad
   A_2=\Vector{a_2 & a_3 & \hdots & a_{n+1} \\
a_3 & a_4 & \hdots & a_{n+2} \\
            \vdots & \vdots & \ddots & \vdots \\
a_{n+1} & a_{n+2} & \hdots & 
a_{2n}
},
$$
and the corresponding matrices $B_1$, $B_2$ from $\b$.
\item The values $\{u_k\}$ can then be computed as the generalized 
eigenvalues of the problems 
\be{geneig}
A_2\v = u A_1\v, \qquad 
B_2\v = u B_1\v.
\ee
\end{enumerate}
The forthcoming section is devoted to a complete justification of this
algorithm.
We recall that these inversion routines have been shown to be numerically
efficient in the paper \cite{GO}.


\section{Analysis of the algorithm}

We begin by stating two classical results of prime importance for the
analysis.

Let $\Tbb^n\subset\Real^{n\times n}$ be the set of lower triangular
$n\times n$ real Toeplitz matrices. 
We define the diagonal scaling matrix and the 
mapping $\LToep:\Real^n\to\Tbb^n$ as
$$
  \Lambda=
   \Vector{ 
 0 &&\\
  &  1 &\\
& &\ddots & \\
 &  & & n-1
}; \qquad
  \LToep(\x) :=
   \Vector{ 
 x_1 &&\\
 x_{2} &  x_1 &\\
\vdots &\ddots &\ddots & \\
 x_{n} & \hdots & x_{2} & x_1
},
\qquad \x=\Vector{x_1 \\ x_2 \\ \vdots \\ x_{n}}.
$$
The mapping $\LToep$ has 
the following properties, see {\it e.g.} \cite{bini}:
\begin{proposition}\label{toeplitz}
Lower triangular Toeplitz matrices commute and $\Tbb^n$ is closed
under matrix multiplication and (when the inverse exists) inversion,
\be{Tmprop1}
     \LToep(\x)\LToep(\y) = \LToep(\y)\LToep(\x) \in \Tbb^n,  \qquad \LToep(\x)^{-1}\in \Tbb^n.
\ee
Moreover, $\LToep$ is linear and 
\be{Tmprop2}
     \LToep(\x)\y = \LToep(\y)\x,
\qquad   \LToep(\x)\LToep(\y) = \LToep(\LToep(\x)\y).
\ee
The $\Lambda$ matrix has the property
\be{Tmprop4}
   \LToep(\Lambda\x) = \Lambda\LToep(\x)-\LToep(\x)\Lambda.
\ee
\end{proposition}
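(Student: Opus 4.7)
My plan is to reduce all five assertions to the observation that $\LToep(\x)$ is a polynomial in the lower shift matrix $S\in\Real^{n\times n}$ defined by $Se_i=e_{i+1}$ for $i<n$ and $Se_n=0$. Since $S^k$ has ones on the $k$-th subdiagonal and $S^n=0$, one reads off
\[
  \LToep(\x) \;=\; \sum_{k=1}^{n} x_k\, S^{k-1}.
\]
Linearity of $\LToep$ is immediate. Commutativity and closure under multiplication in \eq{Tmprop1} follow because any two polynomials in the single matrix $S$ commute and multiply to a polynomial in $S$ (truncated by $S^n=0$), hence stay in $\Tbb^n$.

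For the identities \eq{Tmprop2} I would exploit the fact that the first column of $\LToep(\x)$ is $\x$ itself, i.e.\ $\LToep(\x)e_1=\x$. This gives
\[
  \LToep(\x)\y = \LToep(\x)\LToep(\y)e_1 = \LToep(\y)\LToep(\x)e_1 = \LToep(\y)\x,
\]
by the commutativity just established. Since the product $\LToep(\x)\LToep(\y)$ already lies in $\Tbb^n$, it is determined by its first column, and that column is precisely $\LToep(\x)\y$; this yields $\LToep(\x)\LToep(\y)=\LToep(\LToep(\x)\y)$.

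Closure under inversion is then short. If $\LToep(\x)$ is invertible, its determinant $x_1^n$ must be nonzero, so $x_1\neq 0$ and forward substitution produces a unique $\y$ with $\LToep(\x)\y=e_1$. Applying \eq{Tmprop2}, $\LToep(\x)\LToep(\y)=\LToep(\LToep(\x)\y)=\LToep(e_1)=I$, hence $\LToep(\x)^{-1}=\LToep(\y)\in\Tbb^n$. For \eq{Tmprop4} I would verify the identity entrywise: on the lower triangle $i\geq j$, the $(i,j)$-entry of $\LToep(\Lambda\x)$ is $(i-j)x_{i-j+1}$, whereas $(\Lambda\LToep(\x))_{ij}-(\LToep(\x)\Lambda)_{ij}=(i-1)x_{i-j+1}-(j-1)x_{i-j+1}=(i-j)x_{i-j+1}$, with both sides vanishing above the diagonal.

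The main obstacle is stylistic rather than mathematical: once the shift-polynomial picture is in place everything is classical, and the only point that needs care is the ordering of the argument, since the inversion step relies on \eq{Tmprop2} and the nonvanishing of $x_1$. I would therefore prove the product identity in \eq{Tmprop2} before invoking it for inversion.
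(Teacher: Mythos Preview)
Your argument is correct. Representing $\LToep(\x)=\sum_{k=1}^{n}x_kS^{k-1}$ with the nilpotent shift $S$ is exactly the standard device, and each of the five assertions follows cleanly from it as you describe. The only minor point worth making explicit is that the map $\z\mapsto\LToep(\z)$ is a bijection from $\Real^n$ onto $\Tbb^n$ (since the first column recovers $\z$); you use this implicitly when you say ``an element of $\Tbb^n$ is determined by its first column'' to conclude $\LToep(\x)\LToep(\y)=\LToep(\LToep(\x)\y)$, and it is harmless to state it.

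As for comparison with the paper: the paper does not give a proof of this proposition at all. It is stated as a collection of classical facts with a reference to Bini's book on polynomial and matrix computations. Your write-up therefore supplies a short self-contained proof where the paper simply defers to the literature; the shift-matrix viewpoint you adopt is precisely the one used in that reference, so there is no substantive divergence in method, only in whether the details are spelled out.
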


Another result that we rely heavily on is the classical Newton relations, 
see {\it e.g.} \cite{proofofNewton}:
\begin{proposition}[Newton's relations]\label{new}
Let $P$ be the $n$-degree polynomial,
$$
   P(x) = c_0 + c_1 x + \cdots + c_nx^n =: c_n(x -x_1)\cdots(x-x_n).
$$
Set $S_0 = n$ and define $S_k$ for $k>0$
as the sum of the roots of $P$ taken to the power $k$, $S_k = \sum_{j=1}^n x_j^k$. Then, the $n+1$ following relations hold:
\begin{align}
 c_{k}S_0 + c_{k+1}S_1 + \cdots + c_nS_{n-k}   &= kc_{k}, & &  k =0,\ldots, n. 
\label{newton1}
\end{align}
\end{proposition}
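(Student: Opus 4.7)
My plan is to derive the identity via a generating-function manipulation of the logarithmic derivative of $P$. First I would start from $P(x)=c_n\prod_{j=1}^n(x-x_j)$, from which
$$
  \frac{P'(x)}{P(x)} \;=\; \sum_{j=1}^n \frac{1}{x-x_j}.
$$
I would then multiply by $x$ and expand each summand as a geometric series in $x_j/x$, producing
$$
  \frac{xP'(x)}{P(x)} \;=\; \sum_{j=1}^n \frac{1}{1-x_j/x} \;=\; \sum_{k=0}^{\infty} S_k\, x^{-k},
$$
using $\sum_{j=1}^n x_j^k = S_k$ and the convention $S_0=n$. This can be read either as a convergent Laurent expansion for $|x|>\max_j|x_j|$, or, more cleanly, as a purely formal identity in the ring $\Real((x^{-1}))$.

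The second step is to clear the denominator and match coefficients in
$$
  xP'(x) \;=\; P(x)\cdot \Bigl(\textstyle\sum_{k\ge 0} S_k\, x^{-k}\Bigr).
$$
On the left, $xP'(x)=\sum_{j=0}^n j c_j\, x^j$, so the coefficient of $x^k$ is simply $k c_k$. On the right, the coefficient of $x^k$ is picked up by pairs $(j,i)$ with $j-i=k$ and $0\le j\le n$, hence $i=0,\ldots,n-k$, yielding $\sum_{i=0}^{n-k} c_{k+i} S_i = c_k S_0 + c_{k+1} S_1+\cdots+c_n S_{n-k}$. Equating the two expressions for $k=0,1,\ldots,n$ would give exactly the stated relation (\ref{newton1}).

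The only potentially subtle point is the justification of the geometric-series step, but I expect this to be harmless: interpreted in $\Real((x^{-1}))$ the identity is purely formal, so no convergence hypothesis is needed and no assumption on the roots (real or complex, simple or repeated) has to be made. The main work is therefore just the clean organisation of the coefficient extraction, rather than anything genuinely combinatorial or analytical; this is why the result can be stated for general polynomials with no further hypothesis.
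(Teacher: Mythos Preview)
Your argument is correct: the logarithmic-derivative identity $xP'(x)/P(x)=\sum_{k\ge 0}S_kx^{-k}$ (as a formal Laurent series in $x^{-1}$), followed by clearing the denominator and reading off the coefficient of $x^k$ on each side, yields exactly $kc_k=\sum_{i=0}^{n-k}c_{k+i}S_i$ for $k=0,\ldots,n$. The formal-series interpretation is the right way to sidestep convergence issues, and no hypothesis on the roots is needed.

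As for comparison with the paper: there is nothing to compare. The paper does not prove this proposition; it simply states Newton's relations as a classical fact and points to a textbook reference. Your write-up therefore supplies a self-contained derivation where the paper offers none. The generating-function route you chose is one of the standard proofs and is entirely appropriate here.
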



\subsection{Reformulation of the simplified algorithm}

We want to write the equation $A\a=\m$ using the mapping $\LToep$: 
hence we augment the $\m$ and $\a$-vectors with a zero and one element, 
respectively, to get 
$\mt=(0,\ \m)^T$ and $\at=(1,\ \a)^T$, both in $\Real^{K+1}$.
We observe that the $A$-matrix in \eq{ABdef} is the lower right ${K}\times {K}$ block of $\Lambda-\LToep(\mt)$. Therefore,
$$
     (\Lambda-\LToep(\mt))\at = -\mt + \Vector{0 \\ A\a}
  =\Vector{0 \\ A\a-\m}.
$$
Thus the equation $A\a=\m$ in \eq{absyst} is equivalent to
\be{adef}
   \LToep(\mt)\at= \Lambda\at.
\ee
By the same argument, $B\b=-\m$ in \eq{absyst} is equivalent to
\be{bdef}
   \LToep(\mt)\bt= -\Lambda\bt.
\ee
with $\tilde\b=(1,\;\b)^T$.

We can then directly also show that $\LToep(\at)$ and $\LToep(\bt)$ are in fact each other's inverses.
\begin{lemma}\label{TaTbinv}
 When $\at$ are $\bt$ given by \eqtwo{adef}{bdef}
then $\LToep(\at)\LToep(\bt)=I$.
\end{lemma}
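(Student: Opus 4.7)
The plan is to reduce the matrix identity $\LToep(\at)\LToep(\bt) = I$ to a statement about a single vector. By the product rule in \eq{Tmprop2}, $\LToep(\at)\LToep(\bt) = \LToep\bigl(\LToep(\at)\bt\bigr)$, so it suffices to show that the vector $\c := \LToep(\at)\bt$ equals $(1, 0, \ldots, 0)^T$, since that is the unique generator whose associated lower triangular Toeplitz matrix is the identity. By the symmetry part of \eq{Tmprop2} we also have $\c = \LToep(\bt)\at$, and the first entry is immediately $c_1 = \at_1 \bt_1 = 1$, so the task reduces to proving that the remaining $K$ entries of $\c$ vanish.

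To extract information on $\c$, I would multiply \eq{adef} on the left by $\LToep(\bt)$ and \eq{bdef} on the left by $\LToep(\at)$. By commutativity of lower triangular Toeplitz matrices \eq{Tmprop1}, both left-hand sides collapse to $\LToep(\mt)\c$, and subtracting the two resulting identities eliminates the (unknown) matrix $\LToep(\mt)$ altogether, leaving
$$
   \LToep(\bt)\Lambda\at + \LToep(\at)\Lambda\bt = \zero.
$$

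The finishing move invokes the commutator identity \eq{Tmprop4}: applying it with $\x = \at$ and contracting on the right with $\bt$ gives $\LToep(\Lambda\at)\bt = \Lambda\c - \LToep(\at)\Lambda\bt$, and the first part of \eq{Tmprop2} rewrites the left-hand side as $\LToep(\bt)\Lambda\at$. Plugging this into the displayed equation collapses it to $\Lambda\c = \zero$. Since $\Lambda = \diag(0, 1, \ldots, K)$, this forces $c_2 = \cdots = c_{K+1} = 0$, and combined with $c_1 = 1$ we obtain $\c = (1, 0, \ldots, 0)^T$, so $\LToep(\at)\LToep(\bt) = \LToep(\c) = I$.

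The only non-mechanical step is spotting that equations \eq{adef} and \eq{bdef} should be multiplied respectively by $\LToep(\bt)$ and $\LToep(\at)$ so that the $\LToep(\mt)\c$ terms match and cancel, leaving a clean vector identity on which \eq{Tmprop4} acts to isolate $\Lambda\c$. Once that combination is identified, every remaining step is a direct application of Proposition \ref{toeplitz}.
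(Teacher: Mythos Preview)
Your proof is correct and follows essentially the same route as the paper's: both reduce the matrix identity to showing that the vector $\LToep(\at)\bt$ lies in the null space of $\Lambda$ and has first entry $1$, using Proposition~\ref{toeplitz} together with \eq{adef} and \eq{bdef}. The only cosmetic difference is in the order of operations: the paper expands $\Lambda\LToep(\at)\bt$ directly via \eq{Tmprop4}, then substitutes \eq{adef} to introduce $\LToep(\mt)$ and finally \eq{bdef} to kill the bracket, whereas you first manufacture two expressions for $\LToep(\mt)\LToep(\at)\bt$ and subtract, and only then invoke \eq{Tmprop4} to collapse the residue to $\Lambda$ acting on the product vector---the same ingredients in a slightly different sequence.
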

\begin{proof}
 By Proposition \ref{toeplitz} and \eqtwo{adef}{bdef},
$$
\Lambda\LToep(\at)\bt =
 \LToep(\Lambda\at)\bt+
\LToep(\at)\Lambda\bt =
 \LToep(\LToep(\mt)\at)\bt+
\LToep(\at)\Lambda\bt =
 \LToep(\at)\left[\LToep(\mt)\bt+
\Lambda\bt\right]=0.
$$
Thus $\LToep(\at)\bt$ lies in the nullspace of $\Lambda$ which is spanned by the vector $\ones:=(1, 0, ..., 0)^T$. Moreover, $(\LToep(\at)\bt)_1=1$ since the first elements of $\at$ and $\bt$ are both one and we must in fact have $\LToep(\at)\bt=\ones$. The lemma then follows from \eq{Tmprop2} and the definition of $\LToep$. \end{proof}

\subsection{What lies beneath the algorithm}

To understand the workings of the algorithm we need to
introduce some new quantities and determine how they
relate to $\at$, $\bt$ and $\mt$.

Let us start with some notation: we set
$x_j = u_{2j-1}$ and $y_j = u_{2j}$ for $j=1, ..., n$. 
Furthermore, we introduce the sums
\be{XYdef}
   X_k = \sum_{j=1}^{n} x_j^k, \qquad
   Y_k = \sum_{j=1}^{n} y_j^k, \qquad k=1,2, \ldots, K=2n,
\ee
and define $X_0=Y_0=K$. In the even case, it then holds that
$$
   m_k = \sum_{j=1}^{n} x_j^k - \sum_{j=1}^{n} y_j^k = X_k - Y_k, \qquad
k=1, \ldots, K=2n.
$$
We also define the two polynomials
\be{cdef}
   p(x)=(x-x_1)\cdots(x-x_{n})=:c_0 +c_1x + \cdots + c_{n-1} x^{n-1}+c_{n} x^{n},
\ee
and
\be{ddef}
   q(x)=(x-y_1)\cdots(x-y_{n})=:d_0 +d_1x + \cdots + d_{{n}-1} x^{{n}-1}+d_{n} x^{n}.
\ee
We note here that by construction $c_{n}=d_{n}=1$.

By applying (\ref{newton1}) to $x^{n}p(x)$ with $k=0,\ldots,{K}$ we get
$$
   \Vector{ 
 X_0 &&\\
 X_{1} &  X_0 &\\
\vdots &\ddots &\ddots & \\
 X_{{K}} & \hdots & X_{1} & X_0
}
\Vector{c_{n} \\ c_{{n}-1} \\ \vdots \\ c_0 \\ 0 \\ \vdots \\ 0}
=
\Vector{ {K}c_{n}  \\ ({K}-1)c_{{n}-1}  \\ \vdots \\ (K-n)c_0 \\ 0 \\ \vdots \\ 0}
$$
The analogous system of equations holds also for $Y_k$ and $d_k$. We introduce now some shorthand notation to write these equations in a concise form.
First we set $\cbar = (c_{n}, \ldots, c_0)^T\in\Real^{n+1}$ and $\dbar = (d_{n}, \ldots, d_0)^T\in\Real^{n+1}$. We then construct the larger
vectors, padded with zeros:
$\cb=(\cbar,\ {\bf 0})^T$ and
$\db=(\dbar,\ {\bf 0})^T$, both in $\Real^{{K}+1}$.
Finally, we let $\X=(X_0,\ldots,X_K)^T$ and
$\Y=(Y_0,\ldots,Y_K)^T$.
Using $\LToep$ and $\Lambda$ we can state the systems of equations above as follows:
\be{cdXYrel}
  \LToep(\X)\cb = ({K}I-\Lambda)\cb, \qquad
  \LToep(\Y)\db = ({K}I-\Lambda)\db.
\ee
We also clearly have $\mt=\X-\Y$.

Before we can relate $\cb$ and $\db$ with $\at$ we need the
following lemma:
\begin{lemma}\label{finv}
  Let $\f:\Real^n\to\Real^n$ be defined by $\f(\x) := \LToep(\x)^{-1}\Lambda\x$
for $\x$ with a non-zero first element. Then $\f(\x_1)=\f(\x_2)$ implies that $\x_1=\alpha \x_2$ for some non-zero $\alpha\in\Real$.
\end{lemma}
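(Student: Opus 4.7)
The plan is to leverage the algebraic identities in Proposition \ref{toeplitz}, particularly the commutator relation \eq{Tmprop4}, to reduce the hypothesis $\f(\x_1)=\f(\x_2)$ to a statement about the nullspace of $\Lambda$. The key observation is that $\ker\Lambda$ is one-dimensional, spanned by $\ones=(1,0,\ldots,0)^T$, so showing that some transformed version of $\x_1-\alpha\x_2$ lies in $\ker\Lambda$ should suffice.

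First I would rewrite $\f(\x_1)=\f(\x_2)$ as $\LToep(\x_2)\Lambda\x_1 = \LToep(\x_1)\Lambda\x_2$ by multiplying through by $\LToep(\x_1)\LToep(\x_2)$, which is legitimate because both $\LToep(\x_i)$ are invertible (first elements nonzero) and lower triangular Toeplitz matrices commute by \eq{Tmprop1}. Then, since $\LToep(\x_2)$ is invertible, I would introduce the unique $\y\in\Real^n$ satisfying $\LToep(\x_2)\y=\x_1$; by \eq{Tmprop2} this is equivalent to $\LToep(\y)\x_2=\x_1$. The goal becomes to show that $\y$ is a scalar multiple of $\ones$.

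The technical heart of the argument is to expand $\Lambda\x_1 = \Lambda\LToep(\x_2)\y$ using \eq{Tmprop4} (in the form $\Lambda\LToep(\x_2)=\LToep(\Lambda\x_2)+\LToep(\x_2)\Lambda$) to get
$$
 \Lambda\x_1 = \LToep(\Lambda\x_2)\y + \LToep(\x_2)\Lambda\y.
$$
Substituting this into the identity $\LToep(\x_2)\Lambda\x_1 = \LToep(\x_1)\Lambda\x_2$ and using commutativity together with \eq{Tmprop2} to rewrite $\LToep(\Lambda\x_2)\y = \LToep(\y)\Lambda\x_2$, the term $\LToep(\x_2)\LToep(\y)\Lambda\x_2 = \LToep(\x_1)\Lambda\x_2$ cancels on both sides, leaving $\LToep(\x_2)^2\Lambda\y = \zero$. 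Invertibility of $\LToep(\x_2)$ then gives $\Lambda\y=\zero$.

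Since $\Lambda$ is the diagonal matrix $\diag(0,1,\ldots,n-1)$, its nullspace is spanned by $\ones$, so $\y=\alpha\ones$ for some $\alpha\in\Real$. Then $\x_1=\LToep(\x_2)\y=\alpha\LToep(\x_2)\ones=\alpha\x_2$, and $\alpha\ne 0$ because the first entry of $\x_1$ is nonzero by hypothesis. The main obstacle I anticipate is just being careful with the book-keeping when applying \eq{Tmprop2} and \eq{Tmprop4} in the right order to obtain the clean cancellation; no conceptual difficulty beyond this.
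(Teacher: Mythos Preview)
Your proof is correct. Every step checks out: multiplying through by $\LToep(\x_1)\LToep(\x_2)$, introducing $\y$ via $\LToep(\x_2)\y=\x_1$, expanding with \eq{Tmprop4}, and cancelling $\LToep(\x_1)\Lambda\x_2$ on both sides all work exactly as you describe, leaving $\LToep(\x_2)^2\Lambda\y=\zero$ and hence $\y\in\ker\Lambda=\text{span}(\ones)$.

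However, the paper's argument follows a genuinely different and somewhat shorter route. Instead of introducing the quotient vector $\y=\LToep(\x_2)^{-1}\x_1$ and using the commutator identity \eq{Tmprop4}, the paper sets $\y:=\f(\x_1)=\f(\x_2)$ to be the common \emph{value} of $\f$ and observes that $\f(\x)=\y$ is equivalent to the linear condition $(\LToep(\y)-\Lambda)\x=\zero$ (via \eq{Tmprop2}). Thus $\x_1$ and $\x_2$ both lie in the nullspace of the single lower triangular matrix $\LToep(\y)-\Lambda$. Since the first entry of $\y=\LToep(\x)^{-1}\Lambda\x$ is necessarily zero, this matrix has diagonal $(0,-1,\ldots,-(n-1))$, so its nullspace is one-dimensional, forcing $\x_1$ and $\x_2$ to be proportional. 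The paper's approach is slicker in that it recognizes the equation $\f(\x)=\y$ as linear in $\x$ and avoids the commutator manipulation entirely; your approach, while longer, is more hands-on and shows directly that the ``ratio'' $\LToep(\x_2)^{-1}\x_1$ is a scalar multiple of $\ones$. Both are valid and self-contained.
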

\begin{proof}
Suppose $\f(\x)=\y$. Then $\LToep(\x)\y=\Lambda\x$ and by Proposition \ref{toeplitz}, $(\LToep(\y)-\Lambda)\x=0$.
Hence $\f(\x_1)=\f(\x_2)$ implies that $\x_1$ and $\x_2$ both lie in the nullspace of $\LToep(\y)-\Lambda$. 
Since the top left element of $\Lambda$ is zero, it follows that 
the first element of $\y$ is zero 
and therefore the diagonal of $\LToep(\y)$ is zero. Consequently, the nullspace of $\LToep(\y)-\Lambda$ has the same dimension as that of $\Lambda$, which is one. \end{proof}

We can now merge together and express the general structure from
\eqtwo{cdef}{ddef} and \eqtwo{adef}{bdef} in the most concise way.
\begin{lemma}\label{cad}
Suppose $\cb$, $\db$ are defined by \eqtwo{cdef}{ddef} and $\at$, $\bt$ by \eqtwo{adef}{bdef}. Then
$$
   \LToep(\at)\cb =\db, \qquad
   \LToep(\bt)\db  =\cb.
$$
\end{lemma}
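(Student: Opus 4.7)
The plan is to introduce $\h := \LToep(\at)\cb$ and show that it satisfies the same relation as $\db$, namely $\LToep(\Y)\h = (KI - \Lambda)\h$. Lemma \ref{finv} will then force $\h$ and $\db$ to agree up to a scalar, and inspecting first entries will pin the scalar down to one.

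To establish the key identity I would compute $\LToep(\Y)\h$ directly. Linearity of $\LToep$ and $\mt=\X-\Y$ give $\LToep(\Y)=\LToep(\X)-\LToep(\mt)$, and commutativity of lower triangular Toeplitz matrices (Proposition \ref{toeplitz}) lets me push both $\LToep(\X)$ and $\LToep(\mt)$ through $\LToep(\at)$. Using \eq{cdXYrel} on the resulting $\LToep(\X)\cb$ produces $K\h-\LToep(\at)\Lambda\cb-\LToep(\mt)\h$. The obstruction is the mixed term $\LToep(\at)\Lambda$; I would rewrite it via \eq{Tmprop4} applied to $\at$ as $\Lambda\LToep(\at)-\LToep(\Lambda\at)$, and then use the defining relation \eq{adef}, $\Lambda\at=\LToep(\mt)\at$, together with \eq{Tmprop2}, to identify $\LToep(\Lambda\at)=\LToep(\mt)\LToep(\at)$. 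After substituting back, the two $\LToep(\mt)\h$ terms cancel and one is left with $\LToep(\Y)\h=K\h-\Lambda\h$, exactly the relation satisfied by $\db$.

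To conclude, I would apply \eq{Tmprop2} to rewrite $\LToep(\Y)\h=\LToep(\h)\Y$, noting that $\LToep(\h)$ is invertible because its first entry is $\at_1 c_n=1$. Rearranging gives $\Y=K\ones-\f(\h)$, where $\f$ is as in Lemma \ref{finv}; the identical manipulation applied to $\db$ yields $\Y=K\ones-\f(\db)$, so $\f(\h)=\f(\db)$. Lemma \ref{finv} then gives $\h=\alpha\db$ for some nonzero $\alpha$, and equating first entries (both equal to $1$) forces $\alpha=1$. The companion identity $\LToep(\bt)\db=\cb$ is then immediate: multiply $\LToep(\at)\cb=\db$ on the left by $\LToep(\bt)$ and invoke Lemma \ref{TaTbinv}. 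The main difficulty is the chain of substitutions in the second paragraph that makes the two $\LToep(\mt)\h$ contributions cancel; once that clean identity is in hand, the uniqueness lemma makes everything else automatic.
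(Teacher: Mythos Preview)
Your proof is correct and follows essentially the same strategy as the paper: both reach $\f(\LToep(\at)\cb)=\f(\db)$ by combining the commutator identity \eq{Tmprop4}, the defining relation \eq{adef}, and \eq{cdXYrel}, then conclude via Lemma~\ref{finv} and first-entry matching, with the second identity following from Lemma~\ref{TaTbinv}. Your organization---aiming directly for $\LToep(\Y)\h=(KI-\Lambda)\h$ and applying \eq{Tmprop4} to $\at$---is a mild rearrangement of the paper's (which instead expands $\LToep(\mt)\at$ and applies \eq{Tmprop4} to $\cb$), but the ingredients are identical.
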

\begin{proof}
We only need to prove the left equality. The right one follows immediately
from Lemma \ref{TaTbinv}. Let $\v=\LToep(\at)\cb=\LToep(\cb)\at$. We want to show that $\v=\db$.
We note first that by \eq{cdXYrel}
$$
   \LToep(\X)\cb = ({K}I-\Lambda)\cb \quad\Rightarrow\quad
   \LToep(\cb)\X = ({K}I-\Lambda)\cb \quad\Rightarrow\quad
   \X = {K}\LToep(\cb)^{-1}\cb-\LToep(\cb)^{-1}\Lambda\cb,
$$
where $\LToep(\cb)$ is invertible since $c_{n}=1$.
Moreover, it is clear that $\LToep(\y)\ones = \y$
for all $\y$.
Hence, $\X = {K}\ones-\LToep(\cb)^{-1}\Lambda\cb$. In the same
way we also obtain $\Y = {K}\ones-\LToep(\db)^{-1}\Lambda\db$.
Then, 
\begin{align*}
  \LToep(\mt)\at &=
  \LToep(\at)\mt=
  \LToep(\at)(\X-\Y)\\
&= 
-\LToep(\at)\LToep(\cb)^{-1}\Lambda\cb
+\LToep(\at)\LToep(\db)^{-1}\Lambda\db.
\end{align*}
We now note that by Proposition \ref{toeplitz},
\begin{align*}
\LToep(\at)\LToep(\cb)^{-1}\Lambda\cb &=
\LToep(\cb)^{-1}\LToep(\at)\Lambda\cb =
\LToep(\cb)^{-1}\LToep(\Lambda\cb)\at = 
\LToep(\cb)^{-1}\Lambda\LToep(\cb)\at - \Lambda\at 
\\
&= 
\LToep(\cb)^{-1}\Lambda\v - \Lambda\at.
\end{align*}
Since also, $\LToep(\cb)\LToep(\at)=\LToep(\v)$ we get
\begin{align*}   
  \LToep(\mt)\at &=
-\LToep(\cb)^{-1}\Lambda\v + \Lambda\at
+\LToep(\at)\LToep(\db)^{-1}\Lambda\db
\\
 &= \LToep(\at)\Bigl[\LToep(\db)^{-1}\Lambda\db
-\LToep(\v)^{-1}\Lambda\v\Bigr] + \Lambda\at.
\end{align*}
Consequently, by \eq{adef}, $\LToep(\db)^{-1}\Lambda\db =\LToep(\v)^{-1}\Lambda\v$ and by Lemma \ref{finv}, $\v = \alpha \db$;
for some $\alpha\in\Real$. But for the first element in $\v$
we then have $v_1 = c_{n}=\alpha d_{n}$ and we get
$\alpha=1$ since $c_n=d_n=1$.
\end{proof}

Finally, we also establish the following lemma.
\begin{lemma}\label{Tcstruct}
Let $V$ and $W$ be the Vandermonde matrices corresponding to
the roots of $\tilde{p}(x):=xp(x)$ and $\tilde{q}(x):=xq(x)$ respectively. Then
$$
   V^TR\LToep(\cbar)V=\diag(\{\tilde{p}'(x_k)\}), \qquad
   W^TR\LToep(\dbar)W=\diag(\{\tilde{q}'(x_k)\}),
$$
where $R=\{\delta_{n+2-i-j}\}\in\Real^{n+1\times n+1}$ is the reversion 
matrix and $x_0=y_0=0$.
\end{lemma}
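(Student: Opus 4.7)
The plan is to compute the $(k,\ell)$-entry of $V^T R \LToep(\cbar) V$ explicitly and to recognize the resulting expression as a divided difference of the polynomial $\tilde{p}$. First, I would unpack $R\LToep(\cbar)$: since $\LToep(\cbar)$ is lower triangular with $(i,j)$-entry $c_{n+j-i}$ for $i \geq j$, left-multiplication by $R$ (which reverses the order of rows) produces the Hankel matrix whose $(i,j)$-entry equals $c_{i+j-2}$ when $i+j \leq n+2$ and vanishes otherwise.

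Next, adopting the convention $V_{ij} = x_{j-1}^{\,i-1}$ for $i,j = 1, \ldots, n+1$ (so that the columns of $V$ are indexed by the $n+1$ roots $x_0 = 0, x_1, \ldots, x_n$ of $\tilde{p}$), the $(k,\ell)$-entry of $V^T R \LToep(\cbar) V$ becomes
$$
\sum_{\substack{a,b \geq 0 \\ a+b \leq n}} c_{a+b}\, x_{k-1}^{\,a}\, x_{\ell-1}^{\,b}.
$$
Writing $u = x_{k-1}$ and $v = x_{\ell-1}$, then grouping by $s = a+b$, the classical identity $\sum_{a+b=s} u^a v^b = (u^{s+1}-v^{s+1})/(u-v)$ converts this double sum into
$$
\frac{1}{u-v}\sum_{s=0}^{n} c_s \bigl(u^{s+1} - v^{s+1}\bigr) = \frac{u\,p(u) - v\,p(v)}{u-v} = \frac{\tilde{p}(u) - \tilde{p}(v)}{u-v}.
$$

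Since every $x_{k-1}$ (including $x_0 = 0$) is a root of $\tilde{p}$, all off-diagonal entries vanish identically, while the diagonal entries limit (via l'H\^{o}pital, or equivalently by differentiating the geometric identity) to $\tilde{p}'(x_{k-1})$. The corresponding statement for $W^T R \LToep(\dbar) W$ is established by the identical argument with $p,\ \cbar,\ V$ replaced by $q,\ \dbar,\ W$.

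I do not expect a genuine obstacle: the lemma is essentially the standard fact that the Hankel matrix of the coefficients of a polynomial is diagonalized by the Vandermonde matrix of its roots. The only care required is the bookkeeping --- $\cbar$ is the \emph{reversed} coefficient vector of $p$, so that the reversion matrix $R$ precisely undoes this reversal and exposes the natural Hankel structure in the order $c_0, c_1, c_2, \ldots$, which is what pairs cleanly with the product $V^T(\,\cdot\,)V$.
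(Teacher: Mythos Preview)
Your proof is correct and is essentially identical to the paper's own argument: both compute the $(i,j)$-entry of $V^TR\LToep(\cbar)V$ as the double sum $\sum_{r=0}^n\sum_{\ell=0}^r c_r\,x_{i-1}^{\ell}\,x_{j-1}^{r-\ell}$ and identify it with the divided difference $(\tilde p(x_{i-1})-\tilde p(x_{j-1}))/(x_{i-1}-x_{j-1})$, which vanishes off the diagonal and gives $\tilde p'(x_{i-1})$ on it. Your write-up just spells out the intermediate step (that $R\LToep(\cbar)$ is the Hankel matrix with $(i,j)$-entry $c_{i+j-2}$) a bit more explicitly than the paper does.
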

\begin{proof}
We have
$$
   (V^TR\LToep(\cbar)V)_{ij} = \x_{i-1}^TR\LToep(\cbar)\x_{j-1} =
\sum_{r =0}^n\sum_{\ell=0}^rc_r x_{i-1}^\ell  x_{j-1}^{r-\ell} =
\begin{cases}
 \frac{\tilde{p}(x_{i-1})-\tilde{p}(x_{j-1})}{x_{i-1}-x_{j-1}}, & i\neq j,\\
 \tilde{p}'(x_{i-1}), & i=j,
\end{cases}
$$
showing the left equality. The right equality follows in the same way.
\end{proof}

\subsection{Conclusion}

We can now conclude and show that the unknown values $u_j$ 
in \eq{mom-olof} are indeed the generalized eigenvalues of \eq{geneig}. 
\begin{theorem}\label{evenK}
  Suppose $K=2n$; let $\a$, $\b$ be defined by \eq{absyst}. If all values $\{x_j\} \cup \{y_j\}$
  are distinct, then
  $\{x_j\}$, $\{y_j\}$ are the generalized eigenvalues of \eq{geneig}.
\end{theorem}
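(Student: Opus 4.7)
The plan is to establish a Prony-type representation $a_k=\sum_{j=1}^{n}\alpha_j x_j^k$ for the components of $\a$, after which a Vandermonde--diagonal factorization of the Hankel matrices $A_1$ and $A_2$ reveals the $\{x_j\}$ as the generalized eigenvalues of the pencil $(A_2,A_1)$. The starting point is Lemma \ref{cad}: reading the identity $\LToep(\at)\cb=\db$ row by row, and using that $\cb$ (resp.\ $\db$) has only its top $n+1$ entries $c_n,\ldots,c_0$ (resp.\ $d_n,\ldots,d_0$) nonzero, the rows of indices $n+2,\ldots,2n+1$ (where the right-hand side vanishes) simplify to
\[
\sum_{s=0}^{n} c_s\, a_{r+s} = 0, \qquad r=1,\ldots,n,
\]
with the convention $a_0=1$. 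This is a linear recurrence of order $n$ whose characteristic polynomial is $p(x)=\sum_s c_s x^s=\prod_j(x-x_j)$.

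Since the $x_j$ are distinct, the recurrence together with the initial data $a_1,\ldots,a_n$ determines a unique vector $(\alpha_1,\ldots,\alpha_n)$ (via a nonsingular Vandermonde system) such that $a_k=\sum_{j=1}^{n}\alpha_j x_j^k$ for all $k=1,\ldots,2n$. Substituting this expansion into the definitions of the Hankel blocks yields
\[
A_1 = V\,D\,V^T, \qquad A_2 = V\,\mathrm{diag}(x_1,\ldots,x_n)\,D\,V^T,
\]
with $V$ the $n\times n$ Vandermonde matrix $V_{ij}=x_j^{i-1}$ and $D=\mathrm{diag}(\alpha_1 x_1,\ldots,\alpha_n x_n)$. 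As soon as $V$ and $D$ are invertible, left-multiplying $A_2\v=u\,A_1\v$ by $V^{-1}$ and setting $\bv{w}=DV^T\v$ collapses the problem to $\mathrm{diag}(x_j)\bv{w}=u\bv{w}$, so its generalized eigenvalues are exactly $\{x_1,\ldots,x_n\}$.

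The step most in need of care, and the sole place where the distinctness of $\{x_j\}\cup\{y_j\}$ enters, is to verify $\alpha_j\neq 0$ for every $j$, so that $D$ is indeed invertible. I would obtain this by rewriting the relation of Lemma \ref{cad} as $\LToep(\cb)\at=\db$ via Proposition \ref{toeplitz} and expanding the first column of $\LToep(\cb)^{-1}$ in partial fractions: its generating function is $1/\prod_j(1-x_j t)$, and the resulting formula shows that $\alpha_j$ is proportional, up to a nonzero Vandermonde-type factor, to $q(x_j)$, which is nonzero precisely when no $x_j$ coincides with any $y_\ell$. Finally, the whole argument is invariant under the symmetric substitution $(\a,\cb,\db)\leftrightarrow(\b,\db,\cb)$ (using $\LToep(\bt)\db=\cb$ from Lemmas \ref{TaTbinv} and \ref{cad}); repeating it yields $b_k=\sum_j\beta_j y_j^k$ with $\beta_j\neq 0$, and hence the $\{y_j\}$ as the generalized eigenvalues of $B_2\v=u\,B_1\v$.
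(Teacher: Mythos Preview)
Your argument is correct and reaches the same conclusion, but it follows a genuinely different route from the paper's proof. Both start identically, extracting from Lemma~\ref{cad} the linear recurrence $\sum_{s=0}^{n}c_s\,a_{r+s}=0$ for $r=1,\ldots,n$. From there the paper stays elementary: it writes down an explicit eigenvector, namely the coefficient vector $\v$ of $p(x)/(x-x_j)$, verifies $A_2\v=x_jA_1\v$ componentwise from the recurrence, and then checks the nondegeneracy condition $(A_1\v)_1\neq 0$ by computing $\sum_i v_i a_i=q(x_j)$ through the Vandermonde/reversal identity of Lemma~\ref{Tcstruct}. You instead invoke the classical Prony picture: the recurrence forces $a_k=\sum_j\alpha_j x_j^k$, which gives the Vandermonde--diagonal factorizations $A_1=VDV^T$, $A_2=V\,\mathrm{diag}(x_j)\,DV^T$, and reduces the pencil to a diagonal one once $D$ is invertible; you then identify $\alpha_j$ as (a nonzero multiple of) $q(x_j)$ via the partial-fraction expansion of $\prod(1-y_\ell t)/\prod(1-x_\ell t)$. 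Your approach bypasses Lemma~\ref{Tcstruct} entirely and yields the slightly stronger conclusion that $A_1$ itself is invertible (so the pencil is regular), at the cost of importing generating-function machinery; the paper's approach is more self-contained and hands you the eigenvectors explicitly. One small caveat in your write-up: the invertibility of $D=\mathrm{diag}(\alpha_j x_j)$ also needs $x_j\neq 0$, not only $\alpha_j\neq 0$; you should state this, although the paper's own use of Lemma~\ref{Tcstruct} (with $\tilde p(x)=xp(x)$) tacitly relies on the same hypothesis.
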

\begin{proof}
Let $\at$, $\bt$ be defined by \eqtwo{adef}{bdef}, which is equivalent to \eq{absyst}. Also define $\cb$ and $\db$ as before by \eqtwo{cdef}{ddef}. By Lemma \ref{cad} we have $\LToep(\at)\cb=\db \in \Rbb^{2n+1}$, {\it i.e.}
\be{astruct}
\left(
  \begin{array}{llll|lllll}
         1 &             &        &    &&&&\\
         a_1 & 1         &        &    &&&&\\
         \vdots & \ddots & \ddots &    &&&&\\
         a_{n}  & \hdots & a_1  & 1    &&&&\\
\hline
a_{n+1} & a_{n}   &\hdots   & a_1 &         1 &             &   \\
a_{n+2} & a_{n+1} &\hdots   & a_2 &       a_1 & 1         &        &   \\
\vdots  & \vdots  &\ddots  & \vdots & \vdots         & \ddots & \ddots &   \\
a_{2n}  & a_{2n-1} &\hdots & a_{n}  &   a_{n-1}& \hdots & a_1  & 1 \\
 \end{array}
\right)
\Vector{ \\ \cbar \\  \\ \hline \\ { 0} \\ \ \\}
=
\Vector{ \\ \dbar \\  \\ \hline \\ { 0} \\ \ \\}.
\ee
Clearly, the lower left block of the matrix multiplied by $\cbar$ is zero, {\it i.e.} 
$\sum_{i=0}^n c_{i}a_{i+k}=0$ for $k=1, ..., n$. Now, let $v_i$ be the coefficients of the 
polynomial $v(x):=p(x)/(x-x_j)$ for some fixed $j$. Hence, by the special structure of \eq{cdef},
$$
   c_0+c_{1}x + ... + c_{n}x^n=: (v_1+v_2x+ ... + v_n x^{n-1})(x-x_j),
$$
and, for $i=0, ...,n$,
$$
c_{i}=\left\{\begin{array}{lr}-x_jv_{i+1}, & i=0, \\
v_i-x_jv_{i+1}, & 1\leq i \leq n-1, \\
v_i, & i=n.
\end{array}\right.
$$
Thus we deduce,
$$
0=-x_jv_1a_k+\sum_{i=1}^{n-1}(v_i-x_jv_{i+1})a_{i+k}+v_na_{n+k}=\sum_{i=1}^nv_i a_{i+k}-x_j\sum_{i=1}^nv_i a_{i+k-1},
$$
which is the componentwise statement of $A_2\v = x_j A_1\v$. It remains
to show that the rightmost sum is non-zero for at least some $k$,
so that $x_j$ is indeed a well-defined generalized eigenvalue.
Let $\abar=(1,a_1,\ldots,a_n)^T$ and $\vbar=(0,v_1,\ldots,v_n)^T$.
Then \eq{astruct} gives $\LToep(\abar)\cbar=\dbar$ and, using 
Lemma \ref{Tcstruct} while taking $k=1$ we have the sum
$$
 \sum_{i=1}^nv_i a_{i} = \abar^T\vbar
= (\LToep(\cbar)^{-1}\dbar)^T\vbar
= (V^TR\dbar)^T\diag(\{\tilde{p}'(x_k)^{-1}\})V^T\vbar=q(x_j),
$$
since $V^TR\dbar=\{q(x_k)\}$ and
$V^T\vbar=\{x_kv(x_k)\}=\{\delta_{k-j} \tilde{p}'(x_k)\}$. 
Hence, the sum is non-zero 
because $x_j\neq y_i$ for all $i,j$.
The same argument can be used for any $j$, 
which proves the theorem for $\{x_j\}$. The proof for $\{y_j\}$ 
is identical upon exchanging the 
roles of $\cb$, $\at$ and $\db$, $\bt$. This leads to \eq{geneig}. 
\end{proof}

\end{document}